\newtheorem{theorem}{Theorem}[section]
\newtheorem{lemma}[theorem]{Lemma}
\newtheorem{corollary}[theorem]{Corollary}
\newtheorem{proposition}[theorem]{Proposition}
\theoremstyle{definition}
\newtheorem{definition}[theorem]{Definition}
\theoremstyle{remark}
\newtheorem{remark}[theorem]{Remark}
\newcommand{\F}{\mathbb{F}}
\newcommand{\Z}{\mathbb{Z}}
\newcommand{\cA}{\mathcal{A}}
\newcommand{\cB}{\mathcal{B}}
\newcommand{\cC}{\mathcal{C}}
\newcommand{\cH}{\mathcal{H}}
\newcommand{\cM}{\overline{\mathcal{M}}}
\DeclareMathOperator{\Hom}{Hom}
\DeclareMathOperator{\im}{im}
\DeclareMathOperator{\Span}{Span}
\numberwithin{equation}{section}
\title[The $\Z/p$-equivariant cohomology of Deligne-Mumford space $\cM_{0, 1+p}$]{The $\Z/p$-equivariant cohomology of genus zero Deligne-Mumford space with $1+p$ marked points}
\author{Dain Kim and Nicholas Wilkins}
\date{December 2022}
\begin{document}

\maketitle

\begin{abstract}
We prove that the Serre spectral sequence of the fibration $\cM_{0, 1+p} \to E\Z/p \times_{\Z/p} \cM_{0, 1+p} \to B \Z/p$ collapses at the $E_2$ page. 
We use this to prove that: a torsion element of the $\Z/p$-equivariant cohomology with $\F_p$-coefficients of genus zero Deligne-Mumford space with $1+p$ marked points is lifted from non-equivariant cohomology.
This concludes that the only ``interesting" $\Z/p$-equivariant operations on quantum cohomology are quantum Steenrod power operations.
\end{abstract}
\section{Introduction}

\subsection{Background}

The study of equivariant cohomology operations in the context of symplectic geometry is relatively new, but it builds off much older operadic roots. The first notions of these sort of equivariant cohomology operations, specifically useful towards symplectic geometry, appeared in the work independently by Betz \cite{betz} and Fukaya \cite{fukaya}. The next appearance of such operations was by Seidel in \cite{seidel-equivaraint-pop}.

More recently, based on the initial definition by Fukaya in \cite{fukaya}, the second author and Seidel defined and studied in depth the notion of the ``Quantum Steenrod power operations" across the papers \cite{wilkins}, \cite{seidel-formal} and \cite{Seidel-Wilkins}. The idea of these papers was to define operations that look like the topological Steenrod power operations of \cite{steenrod}, but in the context of quantum cohomology: much as the Steenrod power operations measure the chain-level noncommutativity of the cup product, so do the quantum Steenrod power operations do likewise for the quantum cup product.

All of the above papers follow as their guiding principle the equivariant version of the {\it symplectic operadic principle}, i.e.: ``The nature of an equivariant symplectic invariant defined by counting holomorphic curves is determined by the operad of the space of domains."

Without delving too deeply into the technical details and definitions of the quantum Steenrod power operations, the general notion for invariants on quantum cohomology is the following: given a natural number $n$, and a permutation group $G < \text{Sym}(n)$, one can consider the Deligne-Mumford space $\cM_{0,1+n}$. This space roughly consists of the set of $1+n$ distinct marked points $(z_0,z_1,\dots,z_n)$ with $z_i \in S^2$ (with a compactification we describe in Section \ref{sec:dm-space}), up to M\"obius reparametrisations of $S^2$. Then $G$ acts via permuting the last $n$ points. The idea is, for a nice choice of $G$ and $p | |G|$ for a prime $p$, that each closed element of $C_*^G(\cM_{0,1+n}; \mathbb{F}_p)$ should have an associated equivariant quantum operation, and that any boundary in $C_*^G(\cM_{0,1+n}; \mathbb{F}_p)$ determines a $1$-dimensional cobordism (given by a $1$-dimensional parametric moduli space) between the moduli spaces determining the chain-level definitions of the two different operations. Hence, one expects that $H_*^G(\cM_{0,1+n}; \mathbb{F}_p)$ should give a good amount of information about $G$-equivariant quantum operations. One would also wonder the case $p \nmid |G|$, but in this case the $G$-equivariant cohomology of $\F_p$ coefficients is the $G$-invariant part of the ordinary cohomology, which gives no interesting operations. 

Now, the quantum Steenrod $p$-th power operation is defined using $n=p$, $G = \mathbb{Z}/p$, and the class of $H_*^{\mathbb{Z}/p}(\cM_{0,1+p}; \mathbb{F}_p)$ in question is determined by a fixed point of the $G$-action on the Deligne-Mumford space. In particular, viewing $S^2$ as the extended complex plane, say $z_0 = 0$ and $z_1,\dots,z_p$ are $p$-th roots of unity (in the obvious order $z_i = \zeta^i$ for any primitive $\zeta$).

When $p=2$, this fixed point is the entirety of $\cM_{0,1+2}$, which is just a single point. But when $p$ is larger, the Deligne-Mumford space has more topology: the question of what is the equivariant cohomology of the Deligne-Mumford space is the first step towards answering a question posed by Seidel in \cite[Section 5c]{seidel-formal}. In particular, we study $\mathbb{Z}/p$-actions as opposed to $\text{Sym}_p$-actions. Using the localization theorem e.g. \cite{Quillen}, we know that the $\mathbb{Z}/p$-equivariant cohomology of $\cM_{0,1+p}$ matches that of the fixed locus once one inverts the generator $u$ of $H^2(B \mathbb{Z}/p; \mathbb{F}_p)$. In particular, up to $u$-torsion the equivariant cohomology of $\cM_{0,1+p}$ should look like the fixed points set of the $\mathbb{Z}/p$-action, which is indeed the case where $z_0=0$ and the other $z_i$ are roots of unity, hence quantum Steenrod power operations. Another way of saying this is up to $u$-torsion, the only equivariant operations we obtain in this manner (i.e. taking classes in $H_*^{\mathbb{Z}/p}(\cM_{0,1+p}; \mathbb{F}_p)$) are quantum Steenrod power operations.

Finally, this leads us to the core question answered in this paper: ``are there any interesting $u$-torsion cycles in $H_{\mathbb{Z}/p}^*(\cM_{0,1+p}; \mathbb{F}_p)$?" A positive answer would have meant potentially interesting new operations, while a negative answer means that every $\mathbb{Z}/p$-equivariant operation is a Steenrod power operation. In this paper, we demonstrate that while there are some $u$-torsion cycles in $H_{\mathbb{Z}/p}^*(\cM_{0,1+p}; \mathbb{F}_p)$, these all correspond to non-equivariant cohomology: basically, they arise as $\mathbb{Z}/p$-invariant cycles in $H^*(\cM_{0,1+p}; \mathbb{F}_p)$. In conclusion, apart from the resulting non-equivariant operations (which are just defined via standard Gromov-Witten invariants) the only $\mathbb{Z}/p$-equivariant operations we may define are quantum Steenrod power operations. We formalise this in Corollary \ref{cor: ties_with_quantum}.

\begin{remark}
This should be contrasted with the case where one considers the composition of quantum Steenrod operations (see e.g. the quantum Adem relation \cite[Section 7]{wilkins}): in this setting, $n=p^{p}$ and $G = \mathbb{Z}/p \int \mathbb{Z}/p$ (the wreath product). At least for $p=2$ (and expected for larger $p$) there exist interesting and exotic elements of $H_{G}^*(\cM_{0,1+n}; \mathbb{F}_p)$ that do not obviously arise as Steenrod power operations.
\end{remark}

\subsection{Summary of paper}

We begin in Section \ref{sec:dm-space} with recalling some preliminary concepts: in particular, we recall Deligne-Mumford space, cohomology with local systems, a construction of $\mathbb{Z}/p$-equivariant cohomology, Serre spectral sequence, and the localization theorem.

In Section \ref{section:structure-of-Hm0n-etc}, we recall a construction of $H^*(\cM_{0,1+p};\mathbb{F}_p)$ and importantly observe how the action of $\mathbb{Z}/p$ on $\cM_{0,1+p}$ descends to an action on $H^*(\cM_{0,1+p};\mathbb{F}_p)$, noting specifically the fixed classes. We then proceed to construct the associated Serre spectral sequence associated to the fibration $$\cM_{0,1+p} \hookrightarrow \cM_{0,1+p} \times_{\mathbb{Z}/p} E \mathbb{Z}/p \rightarrow B \mathbb{Z}/p.$$

Finally, in Section \ref{sec:main-result} we proceed to prove the following, main result:

\begin{theorem} \label{thm:main}
Let $\cM_{0, 1+p}$ be the Deligne-Mumford space of genus zero with $1+p$ marked points. Then the following map is injective:
\[
H_{\mathbb Z/p}^\ast(\cM_{0,1+p} ; \mathbb F_p) \xrightarrow{\rho \oplus i^\ast} H^\ast(\cM_{0, 1+p}; \mathbb F_p) \oplus H_{\mathbb Z/p}^\ast(\cM_{0, 1+p}^{\text{fix}}; \mathbb F_p)
\]
where $\rho$ is induced by forgetting equivariant parameters and $i^\ast$ by the inclusion $i \colon \cM_{0, 1+p}^{\text{fix}} \hookrightarrow \cM_{0, 1+p}$.
\end{theorem}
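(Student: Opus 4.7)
The plan is to show the kernel of $\rho \oplus i^\ast$ is trivial by extracting one constraint on $x$ from each summand being zero, and then combining them using the collapse of the Serre spectral sequence established in Section \ref{section:structure-of-Hm0n-etc} together with the $u$-periodicity of cyclic group cohomology.

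First I would identify $\rho$ with the edge homomorphism of the Serre spectral sequence of the Borel fibration $\cM_{0,1+p} \hookrightarrow \cM_{0,1+p} \times_{\Z/p} E\Z/p \to B\Z/p$, so that $\rho(x)=0$ is equivalent to $x$ lying in strictly positive Leray filtration $F^{\geq 1}$. Second, I would apply the localization theorem (recalled in Section \ref{sec:dm-space}) to the vanishing $i^\ast(x)=0$: after inverting the polynomial generator $u \in H^2(B\Z/p;\F_p)$, the map $i^\ast$ becomes an isomorphism, so $u^N x = 0$ for some $N \geq 1$.

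The combining step then uses the following algebraic fact: for any $\F_p[\Z/p]$-module $V$, the cohomology $H^\ast(B\Z/p; V)$ is $2$-periodic in strictly positive total degree, and the periodicity is realised exactly by multiplication by $u$. On the $E_2$ page this says that $u \cdot \colon E_2^{s,t} \to E_2^{s+2,t}$ is an isomorphism whenever $s \geq 1$. Because the spectral sequence is multiplicative and collapses at $E_2$, if $d \geq 1$ is the smallest filtration containing $x$ then $x$ has nonzero symbol $\bar x \in E_\infty^{d,\ast} = E_2^{d,\ast}$, and $u^N x$ has symbol $u^N \bar x \in E_2^{d+2N,\ast}$. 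By the $u$-periodicity the latter is nonzero, contradicting $u^N x = 0$; hence $x = 0$.

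The hard part, already settled by the preceding sections, is the collapse at $E_2$; once that is in hand the argument above is a short multiplicative-spectral-sequence manipulation. The only subtlety I would expect to pin down carefully is that cyclic-group $u$-periodicity genuinely applies with coefficients in the non-trivial $\Z/p$-module $H^\ast(\cM_{0,1+p};\F_p)$, and that the multiplicative structure of the Serre spectral sequence identifies $u$ with a class of Leray filtration exactly $2$, so that the filtration/symbol bookkeeping in the last step is valid.
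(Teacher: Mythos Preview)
Your proposal is correct and follows essentially the same strategy as the paper: identify $\rho$ with the edge homomorphism so that $\rho(x)=0$ forces $x$ into positive Leray filtration, then combine the multiplicative structure of the collapsed spectral sequence with the injectivity of $u$-multiplication on $E_2^{s,\ast}$ for $s\ge 1$ and the localization theorem to reach a contradiction. The only cosmetic difference is the direction of the contrapositive---the paper argues that $x u^k \neq 0$ for all $k$ and then invokes localization to get $i^\ast(x)\neq 0$, whereas you first extract $u^N x = 0$ from localization and then contradict it via $u$-periodicity---and the collapse you cite is actually proved in Section~\ref{sec:main-result} (\cref{thm:degeneracy_of_sseq}), not in Section~\ref{section:structure-of-Hm0n-etc}.
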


To achieve this, we first demonstrate that the Serre spectral sequence collapses on the $E_2$-page. Then we use this collapse to demonstrate that the theorem holds.

\section*{Acknowledgements}
The first author is grateful to Paul Seidel for numerous helpful discussions and support. The first author was partially supported by MIT’s Undergraduate Research Opportunities Program.
The second author, funded by the Simons Foundation through award 652299, also thanks Paul Seidel. Further, the second author thanks MIT and the Max Planck Institute for Mathematics in Bonn, for hosting them respectively during the writing and the amendments to the paper.

Both authors thank the reviewer for their very helpful comments and corrections.

\section{Preliminaries}
We first give the definition of Deligne-Mumford space of genus zero, which is the space of interest. We also recall the definition of equivariant cohomology. To compute the equivariant cohomology of Deligne-Mumford space $\cM_{0, 1+p}$, we will use a fibration whose corresponding Serre spectral sequence yields this equivariant cohomology. Here, we will need cohomology with twisted coefficients, called local system. At the end of the section, we will state a localization theorem, which will be used to show the collapse of a Serre spectral sequence.

\subsection{The Deligne-Mumford space of genus zero}
\label{sec:dm-space} We give a definition of Deligne-Mumford space of genus zero with marked points.
We use the definition from \cite{mcduff-salamon}. 

A stable nodal curve of genus zero consists of finitely many Riemann spheres $S^2$ where some pairs of two spheres are joined along a nodal singularity so that the adjacency graph of Riemann spheres is a connected tree. 
An $n$-pointed stable nodal genus $0$ curve is a stable nodal genus $0$ curve with $n$ marked points on the spheres that are different from nodes so that each of the spheres has at least 3 nodes or marked points.
Two stable curves are isomorphic if there is a collection of M\"obius transformations from each Riemann sphere to another preserving nodes and marked points. Such a curve is stable in the sense that the identity is the only automorphism.

\begin{definition}
\emph{Deligne-Mumford space of genus zero with $n$ marked points} For $n \ge 3$, $\cM_{0, n}$ is the moduli space of isomorphism classes of stable curves of genus zero with $n$ marked points.
\end{definition}

Note that we are only interested in the case of $n \ge 3$, for purposes of the applications to equivariant quantum operations. It should also be noted that this $\cM_{0, n}$ is a smooth compact manifold, for $n \ge 3$.

\subsection{Cohomology with local systems}
Let $X$ be a path-connected and locally-path-connected topological space. The $n$-chains of $X$ are finite sum $\sum_i n_i\sigma_i$ where $n_i \in \mathbb Z$ and $\sigma_i : \Delta^n \to X$ is an $n$-simplex, and the space of $n$-chains is denoted as $C_n(X)$. We may extend the notion of $n$-chains by allowing coefficients $n_i$ to be elements of a fixed abelian group $G$. The cochain complex of $X$ with coefficient $G$ is defined as $\Hom(C_n(X), G)$. We call the cohomology (resp. homology) induced by such cochains (resp. chains) as cohomology with coefficients (resp. homology with coefficients).

We may extend the cohomology with coefficients further by twisting the coefficient, which leads to the notion of local system. A \emph{local system} on $X$ is a locally constant sheaf of abelian groups on $X$.

\begin{definition}
Let $\cA$ be a local system on $X$. If $\Delta^n = [v_0, v_1, \ldots, v_n]$, then the cochain complex of $X$ with local system $\cA$ is defined as 
\[
    C^n(X; \cA) = \prod_{\sigma : \Delta^n \to X} \cA(\sigma(v_0))
\]
with differential $\delta \colon C^n(X; \cA) \to C^{n+1}(X; \cA)$ given as
\[
    (-1)^n(\delta c)(\sigma) = \cA(\gamma)^{-1} c(\partial_0 \sigma) + \sum_{i=1}^{n+1} (-1)^i c(\partial_i \sigma)
\]
for $c \in C^n(X; \cA)$ and $\sigma : \Delta^{n+1} \to X$
where $\gamma$ is a path $t \mapsto \sigma((1-t)v_0 + tv_1)$ and $\cA(\gamma)$ is the map from the stalk at $\sigma(v_0)$ to the stalk at $\sigma(v_1)$ induced by $\gamma$. 
The \emph{cohomology of $X$ with local system $\cA$} is $H^\ast(X;\cA) := \ker \delta / \im \delta$.
\end{definition}
We refer the reader to \cite[Chapter 4]{Whitehead} for more details.
The local coefficient $G$ may be viewed as the local system that is the trivial bundle $X \times G$.

Even though the local system itself is complicated, if the local system is decomposable as a finite direct sum of local systems, then the cohomology can be written as the direct sum of cohomology with each component of the local system.

\begin{proposition} \label{prop: decompose-local-system}
Let $X$ be a path-connected and locally-path-connected topological space and $F$ a field.
If a local system $\cA$ is decomposed as $\cA = \cB \oplus \cC$ as vector spaces over $F$ for local systems $\cB$ and $\cC$ on $X$, then 
\[
    H^\ast(X;\cA) = H^\ast(X;\cB) \oplus H^\ast(X;\cC).
\]
\end{proposition}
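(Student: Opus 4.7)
The plan is to observe that the decomposition $\cA = \cB \oplus \cC$ of local systems is much more than a stalkwise decomposition: it is a splitting that is preserved by the transport maps $\cA(\gamma)$ along every path $\gamma$ in $X$. I would therefore show directly that the cochain complex $C^\ast(X;\cA)$ splits as a direct sum of cochain complexes $C^\ast(X;\cB) \oplus C^\ast(X;\cC)$, and then pass to cohomology.

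First, for each simplex $\sigma \colon \Delta^n \to X$, the stalk identification $\cA(\sigma(v_0)) = \cB(\sigma(v_0)) \oplus \cC(\sigma(v_0))$ gives an isomorphism of $F$-vector spaces
\[
C^n(X;\cA) = \prod_{\sigma} \cA(\sigma(v_0)) \;\cong\; \prod_{\sigma} \cB(\sigma(v_0)) \;\oplus\; \prod_{\sigma} \cC(\sigma(v_0)) = C^n(X;\cB) \oplus C^n(X;\cC),
\]
and any cochain $c \in C^n(X;\cA)$ decomposes uniquely as $c = c_{\cB} + c_{\cC}$ with $c_{\cB}(\sigma) \in \cB(\sigma(v_0))$ and $c_{\cC}(\sigma) \in \cC(\sigma(v_0))$.

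Second, I would verify that the Whitehead differential respects this splitting. The key point is that the hypothesis $\cA = \cB \oplus \cC$ as local systems means that for every path $\gamma$ in $X$ the parallel transport $\cA(\gamma)$ is block-diagonal with respect to the splitting; equivalently, $\cA(\gamma)^{-1}$ restricts to $\cB(\gamma)^{-1}$ and $\cC(\gamma)^{-1}$ on the respective summands. Since every other term in the formula
\[
(-1)^n(\delta c)(\sigma) = \cA(\gamma)^{-1} c(\partial_0 \sigma) + \sum_{i=1}^{n+1}(-1)^i c(\partial_i \sigma)
\]
is evaluation of $c$ at a face followed by (at worst) parallel transport, the differential carries the subcomplex $C^\ast(X;\cB) \hookrightarrow C^\ast(X;\cA)$ into itself, and similarly for $C^\ast(X;\cC)$.

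Consequently $C^\ast(X;\cA)$ decomposes as a direct sum of subcomplexes isomorphic to $C^\ast(X;\cB)$ and $C^\ast(X;\cC)$, and since cohomology commutes with finite direct sums of chain complexes one obtains $H^\ast(X;\cA) = H^\ast(X;\cB) \oplus H^\ast(X;\cC)$. I expect no real obstacle here; the only subtle point is the verification that the decomposition is preserved by transport, which is automatic from the definition of a direct sum of local systems but worth stating explicitly.
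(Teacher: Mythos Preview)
Your proposal is correct and follows essentially the same argument as the paper: split the cochain groups stalkwise, observe that the transport maps (and hence the differential) are block-diagonal because $\cA(\gamma) = \cB(\gamma) \oplus \cC(\gamma)$, and conclude that the cochain complex---and therefore the cohomology---decomposes as a direct sum. The paper's proof is simply a terser version of what you wrote.
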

\begin{proof}
Since $\cA = \cB \oplus \cC$, 
\begin{align*}
    C^\ast(X; \cA) &= C^\ast(X; \cB) \oplus C^\ast(X; \cC),\\
    \cA(\gamma) &= \cB(\gamma) \oplus \cC(\gamma)
\end{align*}
canonically. Therefore, $\delta_\cA = \delta_\cB \oplus \delta_\cC$ for differentials corresponding to local systems, which leads to the desired direct decomposition of cohomology.
\end{proof}

\subsection{$\Z/p$-equivariant cohomology}

We begin by defining the classifying space $B G$, for some group $G$. First we consider $E G$, which is a contractible space equipped with a free $G$-action. In our case, with $G = \Z/p$, we may pick $E \Z/p = S^{\infty} \subset \mathbb{C}^{\infty} = \bigcup_j \mathbb{C}^j$, taking the $\Z/p$-action to be diagonal multiplication by $e^{2 \pi i / p}$. In general, the classifying space is $$B G := E G / G.$$ Then, given some topological space $X$ equipped with a $G$-action $\sigma$, we define its homotopy quotient $EG \times_G X$ to be the quotient of $EG \times X$ by the diagonal action (abusively) denoted $$\sigma(g) \cdot (v,x) = (e^{2 \pi i / p} v, \sigma(g) \cdot x),$$ for $g \in G$.

\begin{definition}
Let $X$ be a topological space, and a topological group $G$ acts continuously on $X$. Then \emph{the equivariant cohomology of $X$ with action $G$ and a coefficient $F$} is
\[
    H_G^\ast(X; F) := H^\ast(EG \times_G X ; F)
\]
where $EG$ is the universal cover of the classifying space $BG$.
\end{definition}

We notice further that, in the case that $X$ is fixed by the $G$-action, then the homotopy quotient $EG \times_G X = BG \times X$, and hence one can use the K\"unneth isomorphism to write $$H_G^\ast(X; F) = H^\ast(EG \times_G X ; F) \cong H^\ast (BG; F) \otimes H^\ast(X;F).$$

We are interested in the case where $G = \Z/p$. Indeed, noting that  $\cM_{0, 3}$ is a single point (and therefore has no torsion equivariant cohomology), we may restrict our attention to $n \ge 4$: thus, in this paper we will assume that $p>2$. 

When $F = \mathbb{F}_p$, we follow the conventions of \cite[Equation (5.5)]{seidel-formal} (with the natural extension for general coefficient systems). In particular, we fix some generator $g$ of $\mathbb{Z}/p$ and write $$C^*_{\mathbb{Z}/p}(X;\mathbb{F}_p) =  \mathbb{F}_p [u] \otimes \Lambda(e) \otimes C^*(X; \mathbb{F}_p),$$ where $|e| = 1$ and $|u|=2$, with $$\begin{array}{lll} d ( u^k \otimes c) &=& e u^k \otimes (gc - c) + u^k \otimes dc \\ d(u^k e \otimes c) &=&   u^{k+1} \otimes (c + gc + \dots + g^{p-1} c) - u^k e \otimes dc. \end{array}$$ 

\subsection{Serre spectral sequence}

\begin{theorem}[{\cite[Proposition 6]{Serre}}] \label{thm:Serre_sseq}
If $F \to E \xrightarrow{\pi} B$ is a Serre fibration and $G$ a fixed abelian group, there is a spectral sequence
\[
    E^{p,q}_2 = H^p(B; \cH^q(F;G)) \Rightarrow H^\ast(E;G),
\]
which is called a Serre spectral sequence. Here, $\cH^q(F;G)$ is the local system over $B$ with stalk $H^q(F;G)$ induced by the Serre fibration $\pi$. If $B$ is a CW complex, denote by $B^k$ the $k$-skeleton of $B$. We have a filtration of $E$ as $E^k = \pi^{-1}(B^k)$.
Then the filtration of $H^m(E;G)$ is given as $F^k H^m(E;G) = \ker( H^m(E;G) \to H^m(E^{k-1};G))$.
\end{theorem}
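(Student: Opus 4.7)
The plan is to construct the spectral sequence from the pullback of the skeletal filtration of $B$, then identify the $E_1$ and $E_2$ pages via excision and a monodromy computation. The cited source is Serre's original paper, so the proposal essentially recapitulates the standard construction.

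First I would reduce to the case where $B$ is a CW complex: if $B$ is arbitrary, take a CW approximation $B' \to B$ and pull back the Serre fibration to $B'$; since cohomology is a weak-homotopy invariant, the resulting spectral sequence computes $H^*(E;G)$ as desired. With $B$ a CW complex, set $E^k = \pi^{-1}(B^k)$ and define the decreasing filtration on singular cochains by
\[
F^k C^*(E;G) = \ker\bigl( C^*(E;G) \to C^*(E^{k-1};G) \bigr),
\]
i.e.\ cochains that vanish on chains of the preimage of the $(k-1)$-skeleton. The standard machinery of filtered cochain complexes then produces a cohomological spectral sequence with
\[
E_1^{p,q} = H^{p+q}(E^p, E^{p-1}; G),
\]
whose differentials are the connecting homomorphisms of the long exact sequences of the pairs $(E^{p+1}, E^p)$, and whose abutment inherits the induced filtration, which is exactly $F^k H^m(E;G) = \ker(H^m(E;G) \to H^m(E^{k-1};G))$ by construction.

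Next I would identify the $E_1$ page by excision. The skeleton $B^p$ is built from $B^{p-1}$ by attaching $p$-cells $D_\alpha^p$ along maps $\partial D_\alpha^p \to B^{p-1}$. Using the homotopy lifting property for the Serre fibration and contractibility of $D_\alpha^p$, the pullback of $E \to B$ over each $D_\alpha^p$ is fiber-homotopy equivalent to $D_\alpha^p \times F$. Excision then gives
\[
H^{p+q}(E^p, E^{p-1}; G) \cong \prod_\alpha H^{p+q}(D_\alpha^p \times F, \partial D_\alpha^p \times F; G) \cong \prod_\alpha H^q(F;G),
\]
the last isomorphism by the suspension/Künneth formula. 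The $d_1$ differential then matches the cellular coboundary, but one must track how the trivialization of the pullback fibration depends on the attaching data, and this dependence is precisely the monodromy action of $\pi_1(B)$ on $H^q(F;G)$. Thus $d_1$ is the cellular coboundary with coefficients in the local system $\cH^q(F;G)$, yielding
\[
E_2^{p,q} = H^p(B; \cH^q(F;G)).
\]

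Finally, convergence to $H^*(E;G)$ would follow from the exhaustiveness and degreewise-boundedness of the filtration: for fixed $n$, one has $F^{n+1} H^n(E;G) = 0$ and $F^0 H^n(E;G) = H^n(E;G)$, so the filtration has finite length in each total degree and the spectral sequence converges strongly. The main obstacle in the argument is the monodromy step: one must verify that the gluing maps of trivializations on adjacent cells precisely realize the parallel-transport action of $\pi_1(B)$ on the fiber cohomology, a point that requires care with basepoints and orientations and that is ultimately what forces the local-system coefficients $\cH^q(F;G)$ rather than constant coefficients $H^q(F;G)$.
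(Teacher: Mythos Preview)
Your proposal is the standard construction and is correct. Note, however, that the paper does not supply its own proof of this statement: it is quoted as \cite[Proposition 6]{Serre} and used as a black box, so there is no in-paper argument to compare against. Your sketch is essentially Serre's original approach via the skeletal filtration, so in that sense it matches the cited source.
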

For brevity, we define $F^m_k := F^k H^m(E; G)$.

The \cref{thm:Serre_sseq} shows that the $E^2$ page of the Serre spectral sequence corresponding to a fibration $X \hookrightarrow EG \times_G X \to BG$ with coefficient $\F_p$ is
\[
    H^i(BG ; \cH^j(X ; \F_p))
\]
and converges to $H^\ast(EG \times_G X; \F_p) = H^\ast_G(X;\F_p)$.

\subsection{Localization} The intuitive notion of the localization theorem is that, up to $u$-torsion, all the equivariant cohomology classes of a space $X$ are encoded in the fixed-point space $X^{\text{fix}}$. We will use the localization theorem on the Deligne-Mumford space $\cM_{0, 1+p}$. The fixed-point space of the Deligne-Mumford space consists of a set of points, and so the localization theorem provides a powerful restriction on the equivariant cohomology of $\cM_{0, 1+p}$.

We restate the localization theorem given by Quillen as a version we will use to the Deligne-Mumford space $\cM_{0, 1+p}$ with a group $\Z/p$.
\begin{theorem}[{\cite[Theorem 4.2]{Quillen}}] \label{thm:localization}
For a compact manifold $X$ with an action of $\Z/p$, the inclusion of the fixed-point set $X^\text{fix} \hookrightarrow X$ induces an isomorphism 
\[
    H_{\Z/p}^\ast (X; \F_p)[u^{-1}] \xrightarrow{\cong} H_{\Z/p}^\ast(X^\text{fix}; \F_p)[u^{-1}]  
\]
where $u$ is a generator of $H^2(B\Z/p; \F_p)$.
\end{theorem}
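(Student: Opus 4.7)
The plan is to localize a Mayer--Vietoris long exact sequence comparing $X$ to $X^{\text{fix}}$, exploiting the fact that $\Z/p$ acts freely on the complement of the fixed locus and that the equivariant cohomology of a free $\Z/p$-space is entirely $u$-torsion.

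\textbf{Step 1 (equivariant cover).} Since $\Z/p$ has prime order, the isotropy group of any point is either trivial or all of $\Z/p$, so $\Z/p$ acts freely on $U := X \setminus X^{\text{fix}}$. Averaging an arbitrary Riemannian metric on the compact manifold $X$ produces a $\Z/p$-invariant metric, and the equivariant tubular neighborhood construction then provides an open $\Z/p$-invariant neighborhood $N \supset X^{\text{fix}}$ that equivariantly deformation retracts onto $X^{\text{fix}}$. In particular $\{N, U\}$ is an open equivariant cover of $X$, and $\Z/p$ still acts freely on $N \cap U$.

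\textbf{Step 2 (free part is $u$-torsion).} For any space $Y$ with a free $\Z/p$-action, the projection $E\Z/p \times Y \to Y$ is a $\Z/p$-equivariant homotopy equivalence, giving
\[
    H^*_{\Z/p}(Y; \F_p) \cong H^*(Y/(\Z/p); \F_p)
\]
as modules over $H^*(B\Z/p; \F_p)$, with $u$ acting by pull-back along the classifying map $Y/(\Z/p) \to B\Z/p$ followed by cup product. Applied to $Y = U$ and $Y = N \cap U$, the quotients are manifolds of dimension $\dim X$, so their $\F_p$-cohomology vanishes above degree $\dim X$. Because $u$ raises degree by $2$, its action is locally nilpotent, and therefore
\[
    H^*_{\Z/p}(U; \F_p)[u^{-1}] = 0 = H^*_{\Z/p}(N \cap U; \F_p)[u^{-1}].
\]

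\textbf{Step 3 (assembly).} The Mayer--Vietoris long exact sequence for the cover $\{N, U\}$ is a sequence of $H^*(B\Z/p; \F_p)$-modules. Localization at $u$ is exact and, by Step 2, annihilates the $U$- and $(N \cap U)$-summands, leaving the isomorphism
\[
    H^*_{\Z/p}(X; \F_p)[u^{-1}] \xrightarrow{\cong} H^*_{\Z/p}(N; \F_p)[u^{-1}] \xleftarrow{\cong} H^*_{\Z/p}(X^{\text{fix}}; \F_p)[u^{-1}],
\]
the second map induced by the equivariant deformation retraction $N \simeq X^{\text{fix}}$. Naturality of the cover under $i \colon X^{\text{fix}} \hookrightarrow X$ identifies the composite with $i^*$.

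The principal technical hurdle is Step 2: one must justify both that the $H^*(B\Z/p; \F_p)$-module structure on the Borel construction of a free $\Z/p$-space coincides with pullback along the classifying map of the quotient, and that compactness of $X$ forces the relevant quotients to be finite-dimensional. With these identifications the rest of the argument is formal. A variant would replace Mayer--Vietoris by the long exact sequence of the pair $(X, U)$ together with an excision-type identification of $H^*_{\Z/p}(X, U; \F_p)$ with $H^*_{\Z/p}(N, N \cap U; \F_p)$, but this introduces a Thom-class degree shift that the Mayer--Vietoris route avoids.
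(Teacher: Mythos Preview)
The paper does not supply a proof of this statement; it is quoted verbatim as \cite[Theorem 4.2]{Quillen} and used as a black box in the proof of \cref{thm:degeneracy_of_sseq}. There is therefore nothing in the paper to compare your argument against.

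Your proof is correct and is essentially the standard textbook argument for the localization theorem in the case of a smooth $\Z/p$-action on a compact smooth manifold (cf.\ Allday--Puppe or tom Dieck). The key mechanism---that the equivariant cohomology of the free locus is $u$-torsion because the quotient is a finite-dimensional manifold, and that localization kills these terms in Mayer--Vietoris---is also the heart of Quillen's own proof, though Quillen works in greater generality (compact Lie group actions on compact $G$-spaces, without smoothness assumptions).

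One small caveat: your Step~1 invokes an equivariant tubular neighborhood of $X^{\text{fix}}$, which requires the action to be smooth (so that $X^{\text{fix}}$ is a submanifold and the averaged-metric normal exponential map is available). The theorem as stated in the paper says only ``compact manifold with an action $\Z/p$'', and for merely continuous actions one would need a different device. This is harmless here, since $\cM_{0,1+p}$ is a smooth projective variety and the $\Z/p$-action permuting marked points is algebraic, hence smooth; your argument covers exactly the case the paper needs.
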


\section{The structure of $H^*(\cM_{0, 1+p};\mathbb{F}_p)$ and the Serre spectral sequence}
\label{section:structure-of-Hm0n-etc}
In this section, we investigate how $\Z/p$ acts on $H^\ast(\cM_{0, 1+p};\F_p)$, which will be used to compute Serre spectral sequence. We also compute $\cM_{0, 1+p}^{\Z/p}$, which will appear in the localization theorem.
\subsection{The cohomology of $\cM_{0, 1+p}$ and its fixed point under $\Z/p$ action}

The cohomology ring structure of $\cM_{0, 1+p}$ was completely described by Keel in his paper \cite{Keel}, and later an equivalent description but in symmetric basis was given by \cite{Etingof}. We will use the basis by \cite{Etingof} to get a benefit of its symmetry.
\begin{theorem}[{\cite[Theorem 5.5]{Etingof}}] \label{thm:basis_of_DM} For the Deligne-Mumford space $\cM_{0, 1+p}$,
let the marked points be $x_1, \ldots, x_{p+1}$ and $X = \{x_1, \ldots, x_p\}$.
For each $S \subset X$ with $|S| \ge 3$, there exists a cohomology class $\Pi_S \in H^\ast(\cM_{0, 1+p};\Z)$ so that monomials of the form $\prod_{|S| \ge 3} \Pi_S^{d_S}$ satisfy the following conditions freely generate $H^\ast(\cM_{0, 1+p}; \Z)$ as a $\Z$-module.
\begin{itemize}
\item If both $d_S$ and $d_T$ are positive, then $S \cap T \in \{\emptyset, S, T\}$.
\item For each $S$ with $d_S > 0$, if $S_1, \ldots, S_k$ are disjoint and form the maximal proper subsets of $S$ with $d_{S_i}>0$, then
\[
    d_S < k - 1 + |S| - \sum_i |S_i|.    
\]
\end{itemize}
Moreover, if $\sigma \colon X \to X$ is an action induced by $1 \in \Z/p\Z$, i.e. $\sigma(x_i) = x_{i+1}$ for $1 \le i \le p-1$, $\sigma(x_p) = x_1$, and $\sigma(x_{p+1}) = x_{p+1}$, 
then $\sigma(\Pi_S) = \Pi_{\sigma(S)}$ and is compatible with the ring structure of $H^\ast(\cM_{0, 1+p};\Z)$.
\end{theorem}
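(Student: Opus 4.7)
The statement packages two classical facts: Keel's description of $H^*(\cM_{0,n};\Z)$ and a choice of generators that is manifestly equivariant for the cyclic $\sigma$-action. My plan is to realize the $\Pi_S$ as explicit boundary classes, verify the symmetry directly from the construction, and then establish the basis property by combining a spanning argument (via Keel's quadratic relations) with a Poincar\'e polynomial count.

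The starting point is Keel's presentation: $H^*(\cM_{0,1+p};\Z)$ is generated by the Poincar\'e duals of the boundary divisors $D_{A|B}$ for partitions $\{A,B\}$ of $\{x_1,\ldots,x_{p+1}\}$ with $|A|, |B| \geq 2$, modulo the quadratic relation $D_{A|B} D_{A'|B'} = 0$ whenever the partitions cross, together with the four-point cross-ratio relations. Since $x_{p+1}$ is the unique $\sigma$-fixed marked point, every such partition is uniquely recorded by the block $S \subset X = \{x_1,\ldots,x_p\}$ avoiding $x_{p+1}$. I would define $\Pi_S$ (for $|S| \geq 3$) as this boundary class, absorbing the size-$2$ generators using the four-point relations so that only the $|S| \geq 3$ classes need to be carried in the symmetric presentation. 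The equivariance $\sigma(\Pi_S) = \Pi_{\sigma(S)}$ is then immediate, as $\sigma$ fixes $x_{p+1}$ and permutes partitions accordingly, and ring compatibility is automatic because $\sigma$ acts by a holomorphic automorphism of $\cM_{0,1+p}$.

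To show that the prescribed monomials span, I would invoke Keel's quadratic relation directly: any monomial $\prod \Pi_S^{d_S}$ containing factors $\Pi_S, \Pi_T$ whose partitions cross already vanishes, so surviving monomials have laminar index set, which is exactly the first bullet. For the exponent bound in the second bullet, I would use the self-intersection / normal bundle formula for $D_S \subset \cM_{0,1+p}$, which identifies $D_S$ with a product of lower-dimensional Deligne-Mumford spaces indexed by the splitting $S \sqcup (X \setminus S \cup \{x_{p+1}\})$, and which rewrites high powers of $\Pi_S$ in terms of classes supported on strictly smaller boundary strata corresponding to the maximal proper subsets $S_i \subsetneq S$. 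Iterating this reduction drives an arbitrary monomial into admissible form.

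The main obstacle is independence. The natural approach is to count admissible monomials in each degree and match the resulting generating function against the known Poincar\'e polynomial of $\cM_{0,1+p}$ (via, e.g., the closed formula of Getzler or Manin), so that the spanning argument upgrades to a $\Z$-basis. The specific inequality $d_S < k - 1 + |S| - \sum |S_i|$ is tuned precisely so that admissible monomials enumerate a basis whose count telescopes against the stratification-based recursion for $\dim H^{\ast}(\cM_{0,1+p};\Q)$. I would expect the delicate combinatorial step here --- an induction on the poset of laminar families in $X$ that separates the contribution of the outermost subset from the admissible contributions on each nested factor --- to be the technical heart of the argument, as it is what transforms the purely algebraic spanning statement into the claim of a free basis.
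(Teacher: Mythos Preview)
The paper does not prove this theorem at all: it is quoted as \cite[Theorem~5.5]{Etingof} and used as a black box for the subsequent corollary. There is therefore no in-paper argument to compare your proposal against; the authors simply import the statement from the cited reference.

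As a side remark on the sketch itself: your identification of $\Pi_S$ with the raw Keel boundary divisor $D_{S\,|\,S^c\cup\{x_{p+1}\}}$ cannot be literally correct, since those divisors are indexed by subsets with $|S|\ge 2$ while the theorem restricts to $|S|\ge 3$. The line where you ``absorb the size-$2$ generators using the four-point relations'' is doing genuine work and is not automatic; in the Etingof et~al.\ construction the classes $\Pi_S$ are specific symmetric combinations chosen precisely so that the $|S|\ge 3$ indexing and the stated exponent bound become natural. Your global strategy---spanning via Keel's relations, then independence by matching a Poincar\'e-polynomial count against the known Betti numbers---is the right shape for a proof, but the precise definition of $\Pi_S$ and the combinatorial induction you flag as ``the technical heart'' are exactly what the cited paper supplies and what your outline leaves unspecified.
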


In the following corollary, we adapt the aforementioned basis for $H^\ast(\cM_{0, 1+p};\Z)$ and find the fixed classes under $\Z/p$ action.

\begin{corollary} \label{cor: number_of_fixed_generators}
There is a basis for $H^\ast(\cM_{0, 1+p};\F_p)$, on which $\sigma^\ast$ acts, which contains exactly $p-1$ fixed elements (and the rest partitioning into $\Z/p$ orbits of size $p$).
\end{corollary}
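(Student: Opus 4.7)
The plan is to apply the integral basis from \cref{thm:basis_of_DM} directly. Since $H^\ast(\cM_{0,1+p};\Z)$ is free abelian, the monomial basis $\{\prod_S \Pi_S^{d_S}\}$ reduces mod $p$ to an $\F_p$-basis of $H^\ast(\cM_{0,1+p};\F_p)$. The induced action $\sigma^\ast(\prod_S \Pi_S^{d_S}) = \prod_S \Pi_{\sigma(S)}^{d_S}$ permutes this basis, and because $\sigma^\ast$ has prime order $p$ every orbit has size either $1$ or $p$. A basis element is $\sigma^\ast$-fixed exactly when its exponent function $d\colon\{S\subseteq X : |S|\geq 3\} \to \Z_{\geq 0}$ is $\sigma$-invariant, so it suffices to count such invariant valid functions $d$.

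The heart of the argument — and the main obstacle — will be to show that every $\sigma$-invariant valid exponent function has $d_S = 0$ for all proper $S \subsetneq X$. I would argue by contradiction: suppose $d_{S_0} > 0$ for some proper $S_0$. Because $S_0 \notin \{\emptyset, X\}$, its $\sigma$-orbit has size $p$, and $\sigma$-invariance of $d$ forces $d_{\sigma^i(S_0)} > 0$ for all $i = 0, \ldots, p-1$. The first condition of \cref{thm:basis_of_DM} then requires $\sigma^i(S_0) \cap \sigma^j(S_0) \in \{\emptyset, \sigma^i(S_0), \sigma^j(S_0)\}$ for all $i, j$. But all these subsets share the common size $|S_0| \geq 3$, so the only way this can happen for $i \neq j$ is that they are disjoint. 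That forces $p \cdot |S_0| \leq |X| = p$, hence $|S_0| \leq 1$, contradicting $|S_0| \geq 3$.

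Once this is established, every $\sigma^\ast$-fixed basis element has the form $\Pi_X^{d_X}$. Applying the second condition of \cref{thm:basis_of_DM} to $S = X$ with no proper $S_i$ contributing (so $k = 0$, $\sum |S_i| = 0$) gives $d_X < -1 + |X| = p - 1$, so $d_X$ ranges over $\{0, 1, \ldots, p-2\}$. This produces exactly $p-1$ fixed basis elements $\{1, \Pi_X, \Pi_X^2, \ldots, \Pi_X^{p-2}\}$, and every other basis element, having $d_{S_0} > 0$ for some proper $S_0$, must lie in a $\sigma^\ast$-orbit of size $p$. Everything outside the key combinatorial observation of the second paragraph is routine bookkeeping with the basis of \cref{thm:basis_of_DM}.
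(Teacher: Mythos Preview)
Your proof is correct and follows essentially the same approach as the paper: use the Etingof monomial basis, show that a $\sigma$-invariant exponent function must be supported on $S=X$, and count the allowed values of $d_X$. The only difference is in the combinatorial core: the paper picks two indices $i,j\in T$ and observes that $j\in T\cap\sigma^{j-i}(T)$ forces $\sigma^{j-i}(T)=T$ (hence $T=X$), whereas you argue that the $p$ distinct translates of a proper $S_0$ would have to be pairwise disjoint, contradicting $p\cdot|S_0|\le p$; both are short and valid, and your counting version is arguably a bit cleaner (and your enumeration $d_X\in\{0,\dots,p-2\}$ matches the later identification $H^\ast(\cM_{0,1+p};\F_p)^{\text{fix}}\cong \F_p[\alpha]/(\alpha^{p-1})$).
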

\begin{proof}
We take the basis of $H^\ast(\cM_{0, 1+p}; \Z)$ described in \cref{thm:basis_of_DM}.
Suppose that $\prod_{|S| \ge 3} \Pi_S^{d_S}$ is fixed under $\sigma$, and $d_T>0$ for some $T$. 
Let $1 \le i < j \le p$ be two elements in $T$.

Since the monomial $\prod_{|S| \ge 3} \Pi_S^{d_S}$ is fixed under $\sigma$ and $d_T > 0$, it follows that $d_{\sigma(T)} > 0$. Repeating the argument, we also have $d_{\sigma^{j-i}(T)} > 0$.

Note that as $i \in T$, by shifting the labels $j-i$ times, $j \in \sigma^{j-i}(T)$ and $|T| = |\sigma^{j-i}(T)|$, so by the first condition in \cref{thm:basis_of_DM}, $\sigma^{j-i}(T) = T$.
But as $p$ is a prime and $0<j-i < p$, i.e. $j-i$ is invertible in $\mathbb{F}^{\times}_p$, we obtain $\sigma(T) = T$, which implies that $T=X$.
Then the second condition of \cref{thm:basis_of_DM} forces that $1 \le d_X \le p-1$. In the other direction, the cohomology classes $\Pi_X^k$ are fixed since $\sigma(X) = X$.

Note that any other monomials form several $p$-cycles since for any $S \subset X$, $\sigma^p(S) = S$. This proves the corollary for $H^\ast(\cM_{0, 1+p} ; \Z)$, i.e. the set of monomials is decomposed into $p-1$ fixed points and some $p$-cycles.

Since $H^\ast(\cM_{0, 1+p}; \Z)$ is free, the generators of $H^\ast(\cM_{0, 1+p} ; \F_p)$ are obtained by reducing the coefficients of the generators in mod $p$. There are no relations among the generators because otherwise we can lift the relation to $H^\ast(\cM_{0,1+p}; \Z)$. Furthermore, the $\mathbb Z/p$ action acts in the same way, so the desired result follows.
\end{proof}

To use the localization theorem, we investigate the fixed points of $\cM_{0, 1+p}$ under the $\Z/p$ action. To be explicit, elements of $\cM_{0, 1+p}$ consist of a nodal Riemann surface $S$ along with a $(1+p)$-tuple $(z_1,\dots,z_p, z_{p+1}) \subset S^{p+1}$, and the $\Z/p$-action acts on the indices of $z_1,\dots,z_p$, i.e. $n \cdot z_j = z_{j + n \text{ mod } p}$, while fixing $z_{p+1}$. 

\begin{proposition}
\label{proposition:number-fixed-points}
There are exactly $p-1$ many fixed points of $\cM_{0, 1+p}$ under the $\Z/p$ action.
\end{proposition}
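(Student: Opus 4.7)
The plan is to show that any fixed point of $\sigma$ on $\cM_{0,1+p}$ is represented by a smooth (irreducible) stable curve, and then to classify such smooth fixed configurations directly, obtaining exactly $p-1$ isomorphism classes.

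For the smooth classification, a fixed moduli point represented by a single sphere corresponds to an ordered configuration $(z_1,\ldots,z_{p+1})$ on $S^2$ admitting a M\"obius transformation $\phi$ with $\phi(z_i) = z_{i+1}$ cyclically on $\{z_1,\ldots,z_p\}$ and $\phi(z_{p+1}) = z_{p+1}$. Since $\phi^p$ fixes $p+1 \geq 3$ distinct points it is the identity, and since $\phi \neq \mathrm{id}$ we conclude $\phi$ has order $p$ with exactly two fixed points on $S^2$. Normalizing so that $z_{p+1} = 0$ and the other fixed point is at $\infty$ gives $\phi(z) = \zeta z$ for some primitive $p$-th root of unity $\zeta$, so $z_i = \zeta^{i-1} z_1$, and a further rescaling sets $z_1 = 1$. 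The $p-1$ choices of primitive $\zeta$ give pairwise non-isomorphic ordered configurations: any M\"obius intertwining two such configurations must fix $0$ and $1$ and send $\zeta \mapsto \zeta'$ and $\zeta^2 \mapsto \zeta'^2$, which a direct computation with $\phi(z) = (c+1)z/(cz+1)$ shows forces $\zeta = \zeta'$.

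For the irreducibility claim, suppose the underlying curve $C$ is reducible and consider its dual tree $T$ with the induced $\sigma$-action. The component $C_0$ containing $x_{p+1}$ is fixed by $\sigma$, and the components carrying the $\sigma$-orbit $\{x_1,\ldots,x_p\}$ form a single $\sigma$-orbit of size either $1$ (all $x_i$ on a common fixed component $C_\ast$) or $p$ (distinct cyclically permuted components). The key technical tool is the following subtree lemma: any subtree $T' \subset T$ attached to the rest of $T$ at a single edge and carrying no marked points violates stability, because a leaf of $T'$ distinct from the attaching vertex has only one node and no marked point, hence fewer than three special points on that component.

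Applying the lemma disposes of both sub-cases. In the first sub-case, $\sigma|_{C_\ast}$ has order $p$ with exactly two fixed points on $C_\ast$, so any node on $C_\ast$ is either one of these fixed points (at most one, since the other is either $x_{p+1}$ or a generic point) or belongs to a $\sigma$-orbit of $p$ nodes; in either case the subtree(s) attached to $C_\ast$ via these nodes contain no marked points, contradicting the lemma. The possibility $x_{p+1} \notin C_\ast$ is ruled out by examining the unique tree path from $C_0$ to $C_\ast$ and observing that stability at $C_0$ forces extra nodes leading to marked-point-free subtrees. In the second sub-case, each branch of $T$ rooted at some $C_i$ contains only the single marked point $x_i$, and stability applied to the vertices of that branch (including $C_i$ itself) forces extra nodes leading to sub-subtrees without marked points, again contradicting the lemma. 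Thus $C$ must be irreducible, so the total count of fixed points is exactly $p-1$. The main obstacle is the combinatorial case analysis on the dual tree combined with the $\sigma$-action, and the subtree lemma is the essential device that propagates stability failure inward and terminates each recursion.
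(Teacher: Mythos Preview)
Your proof is correct and follows the same two-step outline as the paper (first reduce to the irreducible case, then classify smooth fixed configurations), but the arguments for each step differ.

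For the smooth classification you exploit directly that a nontrivial M\"obius transformation of finite order is conjugate to a rotation, normalize to $\phi(z)=\zeta z$, and read off the $p-1$ configurations. The paper instead normalizes $x_1=1$, $x_2=\eta$, $x_{p+1}=0$, bounds the number of compatible M\"obius transformations by a polynomial-degree count, and then exhibits the $p-1$ solutions $x_k=\eta^{s(k-1)}$ explicitly, checking distinctness by another degree argument. Your route is cleaner here.

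For irreducibility the paper gives a short counting argument: if $b$ is the number of $x_i$ on the component carrying $x_1$ and $c$ is the number of components carrying some $x_i$, then $bc=p$, so $b=1$ or $c=1$; in either case a leaf of the dual tree has at most one marked point, violating stability. Your dual-tree argument with the subtree lemma reaches the same contradiction but through a longer case analysis. The paper's argument is the more economical one.

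One small correction: in your first sub-case the parenthetical ``(at most one, since the other is either $x_{p+1}$ or a generic point)'' is not justified---both fixed points of $\sigma|_{C_\ast}$ could in principle be nodes. This does not damage the argument: if both are nodes, at most one of the two attached subtrees can contain $x_{p+1}$, so the other is mark-free and your subtree lemma applies. You should rephrase that clause accordingly.
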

\begin{proof}
We denote by $\cM_{0, 1+p}^{\Z/p}$ the fixed locus under the $\Z/p$-action. Further, the marked points on a stable curve $C \in \cM_{0, 1+p}^{\Z/p}$ are denoted by $x_1, \ldots, x_{p+1}$.

We first show that $C$, as a nodal curve, has exactly one $\mathbb C P^1$ component.
Suppose for a contradiction that $C$ is composed of more than one sphere. The point $x_{p+1}$ must then be alone, because if $x_j$ is on the same sphere then so too should all of the other $x_i$, which would imply $C$ has only one Riemann sphere. Since the number of marked points on the sphere containing $x_1$ is preserved throughout the action of $\Z/p$, call this number $b$, then for any sphere that contains a marked point in $\{ x_1,\dots,x_p \}$, that sphere must have exactly $b$ marked points from that set. Denote by $c$ the number of such spheres containing a marked point in $\{ x_1,\dots,x_p \}$. Since $p$ is a prime, and $p = b \cdot c$, there are 2 cases ($b=1$ or $c=1$): each sphere has at most one marked point or $x_1, \ldots, x_p$ are on one sphere and $x_{p+1}$ is on another sphere.
But both cases lead to a contradiction because both lead to trees of spheres with leafs containing less than $2$ marked points (hence less than $3$ special points overall). Hence $C$ has exactly one Riemann sphere.

Therefore, $x_1, x_2, \ldots, x_{p+1}$ lie on one Riemann sphere. By applying a M\"obius transformation, we may assume that $x_1 = 1, x_2 = \eta, x_{p+1} = 0$ for a root of unity $\eta$ with $\eta^p = 1$. Let $\sigma$ be the action of $1 \in \Z/p$. Then
\begin{align}
    \sigma x_{p+1} &= x_{p+1} = 0 \label{eqn:fix_xp+1} \\
    \sigma x_1 &= x_2 = \eta \label{eqn:fix_x1} \\
    \sigma^{p-1}x_2 &= x_1 = 1. \label{eqn:fix_x2}
\end{align}
Since $C$ and $\sigma C$ are isomorphic, there is a M\"obius transformation $z \mapsto \frac{az+b}{cz+d}$ which gives an isomorphism between two stable curves.
Then \eqref{eqn:fix_xp+1} gives $b = 0$ and by \eqref{eqn:fix_x1} we may assume that $a = \eta$ and $c+d = 1$.
The M\"obius transformation given by $\sigma^{p-1}$ is a rational function whose numerator is constant and denominator is of degree $p-1$ polynomial in $c$. This can be proven by induction on the exponent of $\sigma$. Therefore, applying \eqref{eqn:fix_x2} to this rational function, a solution $\sigma$ satisfying \eqref{eqn:fix_xp+1}, \eqref{eqn:fix_x1} and \eqref{eqn:fix_x2} is in fact a solution of a degree $p-1$ polynomial over $\mathbb{C}$, hence there are at most $p-1$ many such M\"obius transformations. Every fixed point of $\cM_{0,1+p}$ must have that the induced $\mathbb{Z}/p$ action is such a M\"obius action, and any such action completely determines the points $x_3,\dots,x_p$. Hence, $p-1$ is similarly an upper bound of the number of fixed points of $\cM_{0, 1+p}$ under the $\Z/p$ action.

Now we describe these $p-1$ fixed points. Those points are given as $x_{p+1} = 0$, $x_1 = 1$, and $x_k = \eta^{s(k-1)}$ for $1 \le s \le p-1$ and $2 \le k \le p+1$. Denote the stable curve by $C_s$.
Indeed, they are fixed under $\Z/p$ since M\"obius transformation $z \mapsto \eta^s z$ maps $C$ to $\sigma C$.
To show that they represent distinct isomorphism classes, suppose for a contradiction that there is a M\"obius transformation $z \mapsto \frac{az+b}{cz+d}$ from $C_{s_1}$ to $C_{s_2}$ for $1 \le s_1 < s_2 \le p-1$.
There exists an integer $2 \le r \le p-1$ with $rs_1 \equiv s_2 \pmod p$. The M\"obius transformation maps the marked points of $C_{s_1}$ to the marked points of $C_{s_2}$ and so does the map $z \mapsto z^r$. Therefore, 
\[
\frac{az+b}{cz+d} = z^r
\]
has at least $p+1$ solutions $z = 0, 1, \eta^{s_1}, \ldots, \eta^{s_1(p-1)}$. But as $r \le p-1$, the equation is equivalent to a polynomial of degree at most $p$, which forces the map to be the identity. Therefore, $s_1 = s_2$, a contradiction.
\end{proof}

\subsection{The $E_2$ page of the 
Serre spectral sequence}
We compute the $E_2$ page of the Serre spectral sequence for the fibration 
\[
\cM_{0, 1+p} \to E\Z/p \times_{\Z/p} \cM_{0, 1+p} \to B \Z/p.
\]

\begin{proposition} \label{prop: trivial_representation}
If $A$ is the trivial $\Z/p$-representation of $\F_p$, then 
\[
    H^i(B \Z/p; A) \cong \F_p
\] 
for all $i$.
\end{proposition}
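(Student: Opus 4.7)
The plan is to reduce the claim to the standard computation of the cohomology ring $H^\ast(B\Z/p;\F_p)$. Since $A$ is the \emph{trivial} $\Z/p$-representation of $\F_p$, the associated local system on $B\Z/p$ is the trivial bundle $B\Z/p \times \F_p$, so $H^i(B\Z/p;A)$ is just ordinary singular cohomology of $B\Z/p$ with constant $\F_p$-coefficients.

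From here I would invoke the well-known fact that $H^\ast(B\Z/p;\F_p) \cong \F_p[u]\otimes \Lambda[e]$ with $|u|=2$ and $|e|=1$; this is already implicit in the conventions of the paper, which use precisely this ring as coefficients for the chain-level model of equivariant cohomology just introduced in the subsection on $\Z/p$-equivariant cohomology. In particular, in each degree $i$ there is a single monomial ($u^{i/2}$ if $i$ is even, $u^{(i-1)/2}e$ if $i$ is odd), and so $H^i(B\Z/p;\F_p)$ is one-dimensional, hence isomorphic to $\F_p$.

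If one prefers a self-contained derivation rather than citing the ring structure, the direct route is to model $B\Z/p$ as the infinite lens space $S^\infty/(\Z/p)$ and use its standard CW structure with one cell in each nonnegative dimension. The cellular cochain complex with $\F_p$-coefficients is then $\F_p \to \F_p \to \F_p \to \cdots$ with differentials alternately $0$ and multiplication by $p$, both of which vanish modulo $p$. Therefore every cochain is a cocycle and the cohomology is $\F_p$ in each degree.

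There is no real obstacle here; the only thing one must be careful about is the identification of ``local system coming from a trivial representation'' with ``constant coefficients,'' which is immediate from \cref{prop: decompose-local-system} in the one-dimensional case (or directly from the definition, since parallel transport along every loop acts as the identity on the stalk $\F_p$).
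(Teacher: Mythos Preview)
Your argument is correct. The first route (invoke $H^\ast(B\Z/p;\F_p)\cong\F_p[u]\otimes\Lambda[e]$) is a legitimate shortcut, and your second route is essentially the same computation the paper does, phrased downstairs on $B\Z/p$ rather than upstairs on $E\Z/p$.

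The one methodological difference worth noting is that the paper deliberately works with the explicit $\Z/p$-equivariant cochain model of $E\Z/p$ from \cite{Seidel-Wilkins}: cells $\Delta^i,\sigma\Delta^i,\ldots,\sigma^{p-1}\Delta^i$ in each degree, with differentials $\sigma-1$ and $1+\sigma+\cdots+\sigma^{p-1}$. On the trivial representation these become $0$ and $p=0$, recovering exactly your lens-space computation. The payoff of the paper's formulation is that the very same model, with the $\Z/p$-action still visible, is what drives the next proposition (the regular $p$-dimensional representation), where the differentials no longer vanish. Your proof stands on its own for the present statement, but the paper's choice is made with that reuse in mind.
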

\begin{proof}
\cite[Section 2a]{Seidel-Wilkins} gives a cell decomposition of $E\Z/p$ with coefficient $\F_p$ as follows. There is $\Delta^i \in C^i(E\Z/p; \F_p)$ such that $\Delta^i, \sigma \Delta^i, \ldots, \sigma^{p-1}\Delta^i$ freely span $C^i(E\Z/p; \F_p)$ where $\sigma$ is the action of $1 \in \Z/p$, and the differential is given as 
\begin{align}
    \delta \Delta^i = \begin{cases}
        \sigma \Delta^{i+1} - \Delta^{i+1} & i \text{ even},\\
        \Delta^{i+1} + \sigma \Delta^{i+1} + \cdots + \sigma^{p-1}\Delta^{i+1} & i \text{ odd}.
    \end{cases} \label{eqn: cell_decomposition_of_EZ/p}
\end{align}
Therefore each $H^i(B\Z/p;A)$ is spanned by $\Delta^i$, and $\delta \Delta^i = 0$.
Since every differential is zero, no $\Delta^i$ is exact. Hence, $H^i(B\Z/p;A) \cong \F_p$.
\end{proof}

\begin{proposition} \label{prop: p-dim_representation}
If $A$ is a $p$ dimensional $\F_p$-vector space $V = \Span(v_1, \ldots, v_p)$ with the induced $\Z/p$-action on indices, then
\[
    H^i(B\Z/p ; A) \cong \begin{cases}
        \F_p &i=0,\\
        0 &i>0.
    \end{cases}
\] 
\end{proposition}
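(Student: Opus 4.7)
The plan is to adapt the strategy of \cref{prop: trivial_representation}: reuse the explicit CW structure on $E\Z/p$ recalled there, identify the cochain complex computing $H^\ast(B\Z/p; A)$ with a very small complex whose underlying vector space is $A$ in every degree, and then read off the cohomology from the internal structure of $V$ as the regular $\Z/p$-representation.

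Concretely, a $\Z/p$-equivariant cochain $c$ on $E\Z/p$ with values in $A$ is determined by $c(\Delta^i) \in A$ (since equivariance forces $c(\sigma^k \Delta^i) = \sigma^k c(\Delta^i)$), so evaluation at $\Delta^i$ gives an isomorphism $C^i(B\Z/p; A) \cong A$. Dualising the coboundary formula \eqref{eqn: cell_decomposition_of_EZ/p} of \cref{prop: trivial_representation} yields $\partial \Delta^{i+1} = (\sigma^{p-1} - 1)\Delta^i$ when $i$ is even and $\partial \Delta^{i+1} = N \Delta^i$ (where $N := 1 + \sigma + \cdots + \sigma^{p-1}$) when $i$ is odd. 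Hence under the identification $C^i(B\Z/p;A) \cong A$ the differential $\delta$ becomes multiplication by $\sigma^{p-1} - 1$ for $i$ even and multiplication by $N$ for $i$ odd.

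Now specialise to $A = V$ with the permutation action on $v_1, \ldots, v_p$. A direct computation in this basis gives
\[
\ker(\sigma^{p-1} - 1) = \F_p \cdot (v_1 + \cdots + v_p) = \im(N), \qquad \ker(N) = \im(\sigma^{p-1} - 1),
\]
the common value in the second identity being the augmentation kernel $\{\sum a_i v_i : \sum a_i = 0\}$. Substituting these identities into the cochain complex $A \xrightarrow{\sigma^{p-1}-1} A \xrightarrow{N} A \xrightarrow{\sigma^{p-1}-1} A \to \cdots$ immediately yields $H^0 \cong \F_p \cdot (v_1 + \cdots + v_p) \cong \F_p$ and $H^i = 0$ for every $i > 0$, as the kernel at each later stage is exactly the image coming from the previous stage.

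The only place that requires any care is the dualisation of the coboundary formula of \cref{prop: trivial_representation} to read off $\partial \Delta^{i+1}$ on chains; this is a routine sign-tracking exercise and constitutes the main (minor) obstacle in the argument.
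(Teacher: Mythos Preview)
Your argument is correct and is essentially the paper's proof, repackaged: both use the same free $\Z/p$-resolution \eqref{eqn: cell_decomposition_of_EZ/p}, identify the degree-$i$ cochains with a copy of $A$, and reduce the computation to the exactness of the alternating $(\sigma^{\pm 1}-1)/N$ complex on the regular representation. The only cosmetic difference is that the paper keeps the basis $\Delta^i\otimes\sum c_jv_j$ explicit and writes out the kernels and preimages coordinate by coordinate, whereas you name the differentials as $\sigma^{p-1}-1$ and $N$ up front and then quote the kernel/image identities; one small caveat is that in the paper $\Delta^i$ already denotes a \emph{cochain}, so your ``dualisation to chains'' step is really just reading off the same formulas rather than an extra computation.
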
 
\begin{proof}

This is an immediate consequence of the Eckmann-Shapiro lemma, see \cite{eckmann1953cohomology}, relating $$H^*(B\Z/p; A) \cong H^*(\Z/p;A) \cong H^*(1; \mathbb{F}_p)$$ (noting that $A$ is the regular representation of $\Z/p$). 
\end{proof}

\begin{lemma} \label{lem: E2-page-sseq}
The $E_2$ page of the Serre spectral sequence for the fibration 
\[
\cM_{0, 1+p} \to E\Z/p \times_{\Z/p} \cM_{0, 1+p} \to B \Z/p
\]
is 
\[
    E_2^{i, j} \cong \begin{cases}
        \F_p^{r_j} \times \F_p & i=0, \  0 \le j \le 2(p-2), \ j \text{ even},\\
        \F_p & i \ge 1, \ 0 \le j \le 2(p-2), \ j \text{ even},\\
        0 & \text{otherwise}.
    \end{cases}  
\]

where $r_j$ is the number of copies of the regular representation in cohomological degree $j$, appearing in the local system associated to $H^j( \cM_{0, 1+p} ; \F_p )$ over $B \Z/p$.

In particular, as a ring, $E_2^{i,j}$ is generated by: \[ \begin{array}{l} 1 \otimes \alpha \in E_2^{0,2}, \\
u \otimes 1 \in E_2^{2,0}, \\
e \otimes 1 \in E_2^{1,0},\\ \text{some further additional elements of } E_2^{0,\ast}, \end{array} \] subject to at least the following two relations: 
\begin{itemize}
\item these additional elements of $E_2^{0,\ast}$ are eliminated by multiplication by $u \otimes 1$ and $e \otimes 1$, 
\item $e \otimes 1$ is eliminated by multiplication with $e \otimes 1$.
\end{itemize}
\end{lemma}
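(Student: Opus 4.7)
The plan is to decompose the local coefficient system $\cH^j(\cM_{0,1+p}; \F_p)$ along the $\Z/p$-representation structure established in \cref{cor: number_of_fixed_generators} and then apply \cref{prop: trivial_representation,prop: p-dim_representation} summand-by-summand via \cref{prop: decompose-local-system}. Since $\pi_1(B\Z/p) \cong \Z/p$, the local system $\cH^j$ on $B\Z/p$ is precisely $H^j(\cM_{0,1+p}; \F_p)$ equipped with the action $\sigma^\ast$ from \cref{thm:basis_of_DM}. Because Keel's generators $\Pi_S$ all live in degree $2$ and $\cM_{0,1+p}$ has complex dimension $p-2$, the local system $\cH^j$ vanishes unless $j$ is even with $0 \le j \le 2(p-2)$.

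For each such $j$, the Etingof basis of \cref{cor: number_of_fixed_generators} decomposes $\cH^j$ as a direct sum of one copy of the trivial representation (spanned by the unique fixed basis element in degree $j$, namely the appropriate power of $\Pi_X$, or the constant $1$ when $j = 0$) together with one copy of the $p$-dimensional permutation representation of \cref{prop: p-dim_representation} per $\Z/p$-orbit of basis elements in degree $j$. Applying \cref{prop: decompose-local-system} splits $E_2^{i,j} = H^i(B\Z/p; \cH^j)$ into the corresponding direct sum. By \cref{prop: trivial_representation} each trivial summand contributes $\F_p$ to every $E_2^{i,j}$ in the allowed $j$-range; by \cref{prop: p-dim_representation} each permutation summand contributes $\F_p$ only to $E_2^{0,j}$ and vanishes elsewhere. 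Summing these contributions gives the stated formula.

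For the ring structure, I invoke multiplicativity of the Serre spectral sequence. The edge row $E_2^{\ast, 0} = H^\ast(B\Z/p; \F_p)$ is $\F_p[u] \otimes \Lambda[e]$, which supplies the generators $u \otimes 1 \in E_2^{2,0}$ and $e \otimes 1 \in E_2^{1,0}$ together with the relation $(e \otimes 1)^2 = 0$ from the exterior factor. In the column $E_2^{0, \ast}$ (the $\Z/p$-invariants of $H^\ast(\cM_{0,1+p}; \F_p)$), the fixed class $\Pi_X$ provides the distinguished generator $1 \otimes \alpha \in E_2^{0,2}$, while each orbit-sum $v_1 + v_2 + \cdots + v_p$ supplies one of the additional $E_2^{0, \ast}$ generators. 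Because these additional generators live in summands for which \cref{prop: p-dim_representation} makes $H^i(B\Z/p; -) = 0$ for all $i \ge 1$, multiplying by any class of positive horizontal degree, in particular $u \otimes 1$ and $e \otimes 1$, annihilates them.

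The main obstacle is the bookkeeping in the decomposition step — especially verifying via the degree constraint in \cref{thm:basis_of_DM} that the fixed basis classes $\Pi_X^k$ occupy each even degree in $\{0, 2, \ldots, 2(p-2)\}$ exactly once, and that every non-fixed basis element sits in a genuine $\Z/p$-orbit of size $p$ so that it generates a copy of the permutation representation of \cref{prop: p-dim_representation}. Once this decomposition is in place, the rest of the lemma reduces to mechanically assembling the already-proved \cref{prop: trivial_representation,prop: p-dim_representation} through \cref{prop: decompose-local-system}.
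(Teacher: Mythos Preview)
Your proof is correct and follows essentially the same route as the paper: decompose the local system $\cH^j$ via \cref{cor: number_of_fixed_generators}, split the cohomology using \cref{prop: decompose-local-system}, and evaluate the summands with \cref{prop: trivial_representation,prop: p-dim_representation}. Your treatment is in fact more detailed than the paper's---you make explicit the degree bookkeeping and, for the ring statement, argue directly from multiplicativity of the Serre spectral sequence rather than appealing (as the paper does) to the decomposition being compatible with the ring structure.
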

\begin{proof}
By \cref{thm:Serre_sseq},
\begin{align*}
    E_2^{i, j} = H^i(B\Z/p; H^j( \cM_{0, 1+p} ; \F_p )) &\Rightarrow H^\ast(E\Z/p \times_{\Z/p} \cM_{0, 1+p} ; \F_p)\\
    &\quad = H_{\Z/p}^\ast (\cM_{0, 1+p} ; \F_p).  
\end{align*}
\cref{cor: number_of_fixed_generators} breaks down the local system into $\Z/p$ representations.
\cref{prop: decompose-local-system} then tells one that we only need to consider \cref{prop: trivial_representation} and \cref{prop: p-dim_representation}. 

The ring structure follows because \cref{prop: decompose-local-system} holds for cohomology with coefficients in a local system of rings, and the proof of \cref{cor: number_of_fixed_generators} tells us exactly that the splitting of $H^{\ast}(\cM_{0, 1+p} ; \F_p )$ as $\Z/p$-representations is also a splitting as rings.
\end{proof}

\section{Main Result}
\label{sec:main-result}
\subsection{The spectral sequence collapses}
\begin{theorem} \label{thm:degeneracy_of_sseq}
The Serre spectral sequence for the fibration $$\cM_{0, 1+p} \to E\Z/p \times_{\Z/p} \cM_{0, 1+p} \to B \Z/p$$ collapses at the $E_2$ page.
\end{theorem}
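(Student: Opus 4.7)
The plan is to combine a parity observation with the localization theorem to show every $d_r$ vanishes. By \Cref{lem: E2-page-sseq}, the $E_2$ page is supported in even rows $j \in \{0, 2, \dots, 2(p-2)\}$, so whenever $r$ is even the differential $d_r : E_r^{i,j} \to E_r^{i+r, j-r+1}$ lands in an odd row where the page is zero. Hence $d_r = 0$ automatically for even $r$, leaving only the odd $r \ge 3$ to handle.

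For odd $r$, I would invert the class $u \in H^2(B\Z/p; \F_p)$ and compare two dimension computations. By \Cref{prop: p-dim_representation}, the orbit-sum classes (coming from the regular $\Z/p$-representation summands of $H^\ast(\cM_{0,1+p};\F_p)$) are annihilated by $u$ on $E_2$, hence die upon inverting $u$. The remaining ``fixed-column'' classes form a $u$-free submodule, and by \Cref{prop: trivial_representation} together with \Cref{cor: number_of_fixed_generators} they contribute exactly $p-1$ copies of $\F_p[u, u^{-1}] \otimes \Lambda[e]$ on $E_2[u^{-1}]$. On the other hand, \Cref{thm:localization} together with the earlier computation that $\cM_{0,1+p}^\text{fix}$ consists of $p-1$ isolated points identifies $H_{\Z/p}^\ast(\cM_{0,1+p}; \F_p)[u^{-1}]$ with the same $(p-1)$ copies of $\F_p[u, u^{-1}] \otimes \Lambda[e]$. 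Since these total dimensions agree in each total degree, and dimensions can only decrease through differentials, every differential on the $u$-inverted sequence must vanish.

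To promote this to the non-inverted page I would induct on $r$. If $d_s = 0$ for all $2 \le s < r$, then $E_r = E_2$. Since $u$ and $e$ are pulled back from the base $B\Z/p$, they are permanent cycles, and each $d_r$ is $\F_p[u] \otimes \Lambda[e]$-linear; combined with the vanishing on $E_r[u^{-1}]$, this forces every $d_r x$ to be $u$-power-torsion in $E_r$. However, on $E_2$ the $u$-torsion is concentrated entirely in the column $i = 0$ (the orbit sums, by \Cref{prop: p-dim_representation}), whereas the target of $d_r$ lies at horizontal coordinate $i + r \ge 2$, which is purely the $u$-free fixed column. Therefore $d_r x = 0$ for every $x$, completing the induction. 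The main subtlety I anticipate is carefully confirming that $u$-torsion on $E_r$ stays confined to $i = 0$ throughout the induction; this relies on the full $\F_p[u] \otimes \Lambda[e]$-module decomposition of $E_2$ supplied by \Cref{prop: trivial_representation,prop: p-dim_representation} and the inductive hypothesis that $E_r = E_2$.
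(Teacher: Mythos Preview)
Your proposal is correct, and the underlying mechanism---a dimension count forced by the localization theorem---is the same one the paper uses. The packaging, however, is genuinely different and somewhat cleaner.

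The paper argues by contradiction in two cases. Case~1 supposes a nonzero differential on some $\alpha^{\lambda}e^{\epsilon}u^{l}$; multiplying by powers of $u$ propagates this to infinitely many total degrees, so $\dim H^m_{\Z/p}(\cM_{0,1+p};\F_p) < p-1$ along an arithmetic progression, and the paper then runs an explicit pigeonhole/lifting argument to contradict surjectivity of the localized restriction $\iota^\ast[u^{-1}]$. Case~2 supposes a nonzero differential on an orbit-sum class $x$ and dispatches it via $0 = d_r(ux) = u\,d_r x$ together with $u$-injectivity on the target column. There is no parity step, and the paper never forms the localized spectral sequence explicitly.

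Your route replaces all of this with: parity kills every even $d_r$ outright (a simplification not in the paper), and for odd $r$ the dimension match between $E_2[u^{-1}]$ and $H^\ast_{\Z/p}(\cM_{0,1+p};\F_p)[u^{-1}]$ forces each $d_r x$ to be $u$-torsion, while its target column $i+r\ge 2$ is $u$-torsion-free. This single dichotomy subsumes both of the paper's cases. What your approach buys is a cleaner induction and a more conceptual explanation; what the paper's approach buys is that it never has to justify convergence of the localized spectral sequence.

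That convergence is the one point you should add a sentence about; it is standard but not stated in the paper. The cleanest justification: filter $H^\ast$ by \emph{row}, setting $G^k := \bigoplus_m F^{m-k}H^m$. Since $u$ has bidegree $(2,0)$, each $G^k$ is an $\F_p[u]$-submodule with $G^k/G^{k-1} = E_\infty^{\ast,k}$; exactness of localization then gives a finite filtration of $H^\ast[u^{-1}]$ with associated graded $E_\infty[u^{-1}]$, which is what the dimension comparison needs. The subtlety you flagged---that $u$-torsion on $E_r$ stays in column $i=0$---is actually immediate from the inductive hypothesis $E_r=E_2$ and the structure in \cref{lem: E2-page-sseq}; the localized convergence is the step worth a remark.
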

\begin{proof}
\cref{lem: E2-page-sseq} gives the $E_2$ page of the spectral sequence. In the notation of the previous sections, we denote $$H^\ast(B\Z/p ; \F_p) = \F_p[u] \otimes \Lambda[e], \text{ and } H^\ast(\cM_{0, 1+p}; \F_p)^{\text{fix}} = \F_p[\alpha]/(\alpha^{p-1}),$$ where $\deg \alpha = 2$. Then, when $i> 0$, one can write elements of $E_2^{i,j}$ of the form $e^{\epsilon} u^l \otimes \alpha^{\lambda}$. We will abusively write this as $\alpha^{\lambda} e^{\epsilon} u^l$.

Suppose for a contradiction that the spectral sequence does not collapse at the $E_2$ page.

\textit{Case 1.} There is some nonvanishing differential on $\alpha^{\lambda} e^{\epsilon} u^l$, where $\epsilon \in \{0,1\}$. 

In particular, there is some minimal $r$ and a nontrivial differential $d^r$ on some page $E_r$ such that $d^r(\alpha^\lambda u^i) \ne 0$ or $d^r(\alpha^\lambda e u^i) \ne 0$ for some $i$. If $d^r(\alpha^\lambda e u^i) \ne 0$, then we have 
\[
d^r(\alpha^\lambda e u^i) = d^r(\alpha^\lambda u^i)e + (d^re)\alpha^\lambda u^i = d^r(\alpha^\lambda u^i) e \ne 0,
\]
which implies that $d^r(\alpha^\lambda u^i) \ne 0$. Hence, we may only consider the case $d^r(\alpha^\lambda u^i) \ne 0$.
Since 
\[
d^r(\alpha^\lambda u^{i+j}) = d^r(\alpha^\lambda u^i)u^j + \alpha^\lambda u^id^r(u^j) = d^r(\alpha^\lambda u^i)u^j
\]
and $u$ acts injectively on the $E_2, \ldots, E_r$ pages except for those generators arising from $p$-cycles of generators by \cref{prop: p-dim_representation}, this implies that $d^r(\alpha^\lambda u^{i+j}) \ne 0$.
Hence, there are infinitely many nontrivial differentials.
In particular, for some sufficiently large $m$, there is an infinite sequence $\{ m+2k \}_{k \ge 0}$ such that:
\[
    p-1 = \sum_{i+j = m+2k} \dim_{\F_p} E_2^{i,j} > \sum_{i+j = m+2k} \dim_{\F_p} E_\infty^{i,j} = \dim_{\F_p} H_{\Z/p}^{m+2k} (\cM_{0, 1+p}; \F_p).
\]

We write $m = 2n+ \epsilon$ with $\epsilon \in \{0,1 \}$.

Now consider $\iota^\ast \colon H_{\Z/p}^m(\cM_{0, 1+p};\F_p) \to H_{\Z/p}^m(\cM_{0, 1+p}^{\text{fix}};\F_p)$.
We know that once we invert $u$ this becomes an isomorphism by \cref{thm:localization}. Denote this isomorphism by \begin{equation} \label{equation:u-inverted-iota} \iota^\ast[u^{-1}] : H_{\Z/p}^\ast(\cM_{0, 1+p};\F_p)[u^{-1}] \to H_{\Z/p}^\ast(\cM_{0, 1+p}^{\text{fix}};\F_p)[u^{-1}].\end{equation}

However, for the infinite sequence of $\{ m +2k \}_{k \ge 0}$ described above, the $\iota^\ast$ cannot be surjective due to the fact that $\dim H_{\Z/p}^m(\cM_{0, 1+p};\F_p) < p-1$, but $$\dim H_{\Z/p}^m(\cM_{0, 1+p}^{\text{fix}};\F_p) = \dim ( H^0(\cM_{0, 1+p}^{\text{fix}};\F_p) \otimes H^m(B \Z/p; \F_p) ) = p-1.$$ Here we use the K\"unneth isomorphism for the first equality, and Proposition \ref{proposition:number-fixed-points} for the second (i.e. $\dim H^0(\cM_{0, 1+p}^{\text{fix}};\F_p) = p-1$).

By the pigeonhole principle, there exists $v \in H^0(\cM_{0, 1+p}^{\text{fix}};\F_p) \cong \F_p^{\oplus (p-1)}$, and a set of strictly increasing integers $n_1,n_2,\ldots$ such that $v \otimes e^{\epsilon} u^{n+n_i} \in H^{m+2n_i}(\cM_{0, 1+p}^{\text{fix}};\F_p)$ is not hit by $\iota^*$ for each $n_i$. 

Now, considering $v \otimes e^{\epsilon} u^n$ as an element of $ H^{*}(\cM_{0, 1+p}^{\text{fix}};\F_p)[u^{-1}]$, we see  that it must be hit by some element of $H^*_{\Z/p}(X)[u^{-1}]$ under $\iota[u^{-1}]$, as \eqref{equation:u-inverted-iota} is an isomorphism. 
 
 Hence, there is some $\sum_j a_ju^{-j} \in H_{\mathbb Z/p}^*(\cM_{0, 1+p}; \F_p)[u^{-1}]$ with $a_j \in H_{\mathbb Z/p}^{m+2j}(\cM_{0, 1+p};\F_p)$ such that 
\[
    \iota^\ast[u^{-1}](\sum_j a_ju^{-j}) = v \otimes e^\epsilon u^n.  
\]
Let $J$ be maximal such that $a_J$ is nonzero. Then if $k > J$, we obtain that $\sum_{j} a_ju^{k-j} \in H_{\mathbb Z/p}^{m+2k}(\cM_{0, 1+p};\F_p)$, and in $H_{\mathbb Z/p}^{m+2k}(\cM_{0, 1+p};\F_p)[u^{-1}]$ we see that $\sum_{j} a_ju^{k-j} = (\sum_j a_j u^{-j})u^k$. Then for $\nu \colon H_{\Z/p}^\ast(\cM_{0, 1+p}^{\text{fix}}) \hookrightarrow H^\ast_{\Z/p}(\cM_{0, 1+p}^{\text{fix}})[u^{-1}]$ and $\eta \colon H^\ast_{\Z/p}(\cM_{0, 1+p}) \to H^\ast_{\Z/p}(\cM_{0, 1+p})[u^{-1}]$,

$$\begin{array}{lll}
    \nu \iota^\ast(\sum_j a_ju^{k-j}) &=& \iota^\ast[u^{-1}]\eta(\sum_j a_ju^{k-j}) \\  &=& \iota^\ast[u^{-1}]((\sum_j a_ju^{-j})u^k)  \\&=& \iota^\ast[u^{-1}](\sum_j a_ju^{-j})u^k \\ &=& v \otimes e^{\epsilon_i}u^{n_i+k}.  
\end{array}$$
Now we choose $k>J$ such that $k = n_l$ for some $l$. This yields a contradiction, because $\iota^\ast$ does not surject onto $v \otimes e^\epsilon u^{n+k} = v \otimes e^\epsilon u^{n+n_l}$.

\textit{Case 2.} There is some nonvanishing differential on some element of the spectral sequence arising from $p$-cycles in $E^{0,j}_2$ (the additional generators in the statement of \cref{lem: E2-page-sseq}).

There is some minimal $r$ and a nontrivial differential $d^r$ on $E_r$ such that $d^r(x) \ne 0$ for $x \in \F_p^{\text{\# of $p$-cycles}} \le E_2^{0,j}$.
Note that $xu = 0$. But since $u$ acts injectively apart from such $x$, we deduce $0 = d^r (0) = d^r (xu) = d^r (x)u$ is nonzero, a contradiction.

From the above two cases, we conclude that the spectral sequence collapses at $E_2$ page.
\end{proof}

We now have all of the required components to prove the main theorem.

\subsection{The proof of \cref{thm:main}}
Since the spectral sequence collapses at the $E_2$ page, we have
\[
    \phi \colon H_{\Z/p}^m(\cM_{0, 1+p}; \F_p) \xrightarrow{\sim} \bigoplus_{i+j=m} E_2^{i,j},
\]
an isomorphism of $\F_p$ vector spaces.

We assume, for a contradiction, that there exists some nonzero $x \in H_{\Z/p}^m(\cM_{0, 1+p} ; \F_p)$ such that $\rho(x) = 0$ and $\iota^\ast(x) = 0$.
Since $\phi$ is an isomorphism, there is a decomposition of $x = x_0+x_1+\cdots+x_m$ so that $\phi(x_i) \in E_2^{i, m-i}$.

We first prove that $x_0 = 0$. 
By the second statement of \cref{thm:Serre_sseq},
\begin{align*}
    F^0 H_{\Z/p}^m( \cM_{0, 1+p} ) &= H^m( E\Z/p \times_{\Z/p} \cM_{0, 1+p} ),\\
    F^1 H_{\Z/p}^m( \cM_{0, 1+p} ) &= \ker( H_{\Z/p}^m(\cM_{0, 1+p}) \to H^m(\cM_{0, 1+p}) ).
\end{align*}
Hence the map $\phi_0 \colon H_{\Z/p}^m (\cM_{0, 1+p}) \to E_2^{0, m}$, the projection of $\phi$ onto $E_2^{0, m}$, is given as
\[
    H_{\Z/p}^m(\cM_{0, 1+p}) \to H_{\Z/p}^m(\cM_{0, 1+p}) / \ker( H_{\Z/p}^m(\cM_{0, 1+p}) \to H^m(\cM_{0, 1+p} ) )
\]
induced by the projection. 
Let $p \colon (E\Z/p \times_{\Z/p} \cM_{0, 1+p}, \cM_{0, 1+p}) \to (B\Z/p, \ast)$.
\[
\begin{tikzcd}
H_{\Z/p}^m(\cM_{0, 1+p}) \arrow[r, "\rho"] &H^m(\cM_{0, 1+p}) \arrow[r, "\delta"] \arrow{rrd}[near end]{d^m} & H^{m+1} (E\Z/p \times_{\Z/p} \cM_{0, 1+p}, \cM_{0, 1+p})\\
&& H^{m+1}(B\Z/p, \ast) \arrow{u}[near start]{p^\ast} \arrow[r] &H^{m+1}(B\Z/p).
\end{tikzcd}  
\]
Since the domain of $d^m$ is $H^m(\cM_{0, 1+p})$, the transgression gives that for every $\alpha \in H^m(\cM_{0, 1+p})$ there exists $\beta \in H^{m+1}(B\Z/p, \ast)$ with $\delta \alpha = p^\ast \beta$ and $d^m \alpha = \tilde{\beta} \in H^{m+1}(B\Z/p)$, where $\tilde{\beta}$ is the image of $\beta$ in $H^{m+1}(B\Z/p, \ast) \to H^{m+1}(B\Z/p)$.
But as $d^m$ vanishes, $\beta = 0$, and hence $\delta = 0$.
Then by the exactness of the first row, $\rho$ is surjective, and therefore $\rho$ induces the same map as $\phi_0$.
In particular, $\phi(x_0) = \phi_0(x_0) = \rho(x) = 0$, and hence $x_0 = 0$.

Now we prove that $x_i = 0$ for all $i \ge 1$. Let $j \ge 1$ be the minimal index such that $x_j \ne 0$. Let $u$ be the cohomology class as in the proof of \cref{thm:degeneracy_of_sseq}, and $v \in F_2^2/\langle 0 \rangle$ the class of $u$ on the level of the spectral sequence. Let $\phi_k^m \colon H_{\Z/p}^m (\cM_{0, 1+p}) \to E_2^{k,m-k}$ be the projection map of $\phi$ onto $E_2^{k, m-k}$.

Since the cup product induces $F_j^m \times F_{2k}^{2k} \to F_{j+2k}^{m+2k}$ and this product induces $F_j^m/F_{j+1}^m \times F_{2k}^{2k} \to F_{j+2k}^{m+2k}/F_{j+2k+1}^{m+2k}$, we obtain
$\phi_j^m(x_j)v^k = \phi_{j+2k}^{m+2k}(x_ju^k)$. 
But as in the proof of \cref{thm:degeneracy_of_sseq}, $\phi(x_j)v^k \ne 0$, and hence $\phi_{j+2k}^{m+2k}(x_j u^k) \ne 0$. In the case where $s>j$, we can run a similar argument using the inclusion $(F_s^m, F_{s+1}^m) \rightarrow (F_j^m, F_{j+1}^m)$ and considering $F_s^m/F_{s+1}^m \times F_{2k}^{2k} \to F_{j+2k}^{m+2k}/F_{j+2k+1}^{m+2k}$. In particular, we have $\phi_{j+2k}^{m+2k}(x_su^k) = \phi_j^m(x_s) v^k = 0$. Therefore, $\phi_{j+2k}^{m+2k}(xu^k) \ne 0$.  
In particular, $xu^k \ne 0$ for all $k$. Then by \cref{thm:localization}, $\iota^\ast(x) \ne 0$, which is a contradiction.
Therefore, $x = 0$, and the desired result follows.

\subsection{The connection with quantum operations}

In this section, we will describe the relevance of this topological result to symplectic geometry. In order to circumvent the usual requirements of long technical definitions, we will instead provide citations to the relevant technical work for the interested reader. 

Fix some prime $p$. Suppose that $(M,\omega)$ is a symplectic manifold, with appropriate technical conditions to define quantum cohmology. As mentioned above, we will not give a formal definition of quantum cochains $QC^*(M)$ or quantum cohomology $QH^*(M)$, nor the conditions necessary to define them, but we refer the reader to the definition of quantum cohomology in \cite{mcduff-salamon}.

\begin{definition}
We will say that, given a parametrised moduli space of holomorphic maps $\mathcal{M}$, this moduli space is {\it $G$-equivariantly parametrised of order $a$} if:
\begin{itemize}
\item there is a $G$-action on $\{1,\dots,a\}$, inducing an action on the latter $a$ marked points of $\overline{\mathcal{M}}_{0,1+a}$ (the $1+a$ marked points being denoted by $(m_0,m_1,\dots,m_a)$),
\item there is some $i \in \mathbb{Z}_{\ge 0}$ and some finite dimensional smooth manifold $A \subset \overline{\mathcal{M}}_{0,1+a} \times_{G} EG $ representing a homology class in $H_*^{G}(\overline{\mathcal{M}}_{0,1+a})$
\item there is some collection of Hamiltonians $H_w : M \rightarrow \mathbb{R}$ and almost complex structures $J_w$ on $M$ (for $w \in EG$), with conditions such that a generic choice of these ensures regularity of the moduli space (e.g. if $M$ is monotone, we can require $H_w \cong 0$ and $J$ to be independent of $w$, whereas if $M$ is weakly monotone we must ensure $J$ is independent of $w$ by $H_w \neq 0$. See \cite[Section 4]{Seidel-Wilkins} or \cite[Section 4]{wilkins} for more details),
\item there is some $\beta \in QH^*(M) \otimes H^*_{G}(QC^*(M)^{\otimes a}) $, represented by $B \subset M \times \Pi_1^a M \times EG$ (where $B$ is made as a choice among generic representatives such that regularity of the moduli space holds),
\end{itemize}

 such that the moduli space consists of triples $(m,w,u)$ satisfying:
\begin{enumerate}
    \item $[m,w] \in A$, where here $[\bullet]$ denotes the equivalence class with respect to the $G$-quotient
    \item $u : \text{Und}(m) \rightarrow M$, where here $\text{Und}(m)$ is the underlying nodal genus $0$ Riemann surface 
    \item $\overline{\partial}_{J_w} u = X_{H_w}$,
    \item $(u(m_0), u(m_1), \dots, u(m_n), w) \in B$.
\end{enumerate}

\end{definition}

We will define an {\it operadic $G$-equivariant quantum operation of order $a$} to be some additive homomorphism $$QH^*(M; \mathbb{F}_p) \rightarrow H^*_{G}(QC^*(M)^{\otimes a}; \mathbb{F}_p) \rightarrow QH^*(M; \mathbb{F}_p) \otimes H^*(BG; \mathbb{F}_p),$$ such that the first map is induced by the map $x \mapsto x^{\otimes a}$ and the second map is defined by counting the number of points in $0$-dimensional moduli spaces that are $G$-equivariantly parametrised of order $a$, in the usual way. Some examples of such operations include quantum Steenrod operations (\cite[Section 4]{Seidel-Wilkins}, when $G = \mathbb{Z}/p$ and $a = p$), those involved in the quantum Cartan relation (\cite[Section 5]{wilkins}, when $G =\mathbb{Z}/p$ and $a=2p$), and those involved in the quantum Adem relation (\cite[Section 7]{wilkins}, when $G=D_{2p}$ or $G=S_p$ and $a=p^2$).

An immediate consequence of Theorem \ref{thm:main} is then the following:

\begin{corollary}
    \label{cor: ties_with_quantum}

    Any operadic $\mathbb{Z}/p$-equivariant quantum operation of order $p$ is a sum of quantum Steenrod operations and some other quantum operation defined by counting (nonequivariant) Gromov-Witten invariants.
\end{corollary}

\bibliographystyle{plain}
\bibliography{refs}
\end{document}